\newtheorem{theorem}{$\quad$Theorem}
\newtheorem{definition}[theorem]{$\quad$Definition}
\newcommand{\Z}{{\mathbb Z}}
\theoremstyle{definition}
\begin{document}
\title{Numerical chaos in a fractional order logistic map}

\author{Joakim Munkhammar}

\date{\today}


\maketitle

\begin{center}
\small

 \email{joakim.munkhammar@gmail.com}
 \footnote{Studentstaden
 23:230, 752 33 Uppsala, Sweden}
 \normalsize
\end{center}

\begin{abstract}
\noindent In this paper we investigate a fractional order logistic
map and its discrete time dynamics. We show some basic properties
of the fractional logistic map and numerically study its
period-doubling route to chaos.
\end{abstract}

\vspace{10pt}

\small
\begin{center}
\keywords{2000 AMS Mathematics Subject Classification: 26A33, 37D45, 65P20.\\
Keywords: Fractional Calculus, Dynamical Systems, Numerical Chaos
}
\end{center}
\normalsize

\section{Introduction}
The concept of fractional order differentiation and integration is
nearly as old as calculus itself \cite{Munkhammar1}. Already in
the 17:th century Leibnitz made remarks on the fractional
derivative of order $1/2$ \cite{WHITE}. Despite many fundamental
results and important definitions found more than $150$ years ago
by among others Euler and Riemann, the field of applications of
fractional calculus has mainly recently started to draw some
interest \cite{Cascaval,Laskin,Li,Chen,MARK,Oparnica}.
Applications to physics involve viscoelastic systems and
electro-magnetic waves \cite{Li}, but more recently the dynamics
of fractional order dynamical systems, involving fractional
mechanics and fractional oscillators has been studied
\cite{OSC,Oparnica}. Fractional differential equations have for
example arisen in Quantum Mechanics as well as in fractional
generalizations of Einstein field equations
\cite{Laskin,Munkhammar3}. Also fractional generalizations of
various famous dynamical systems such as the fractional
Lorenz-system, the R\"ossler systems \cite{Li} and Chen's system
\cite{Chen} have been investigated. In this paper we shall
investigate the dynamics of a fractional generalization of the
classic logistic map \cite{Thunberg}.

\section{Dynamics}
The discrete dynamics of unimodal maps is well known and has been
extensively studied \cite{Thunberg}. In particular population
dynamics often consider maps like the Ricker-family
\cite{Thunberg}:
\begin{equation}
R_{\lambda,\beta}(x) = \lambda x e^{-\beta x}, \, \, \lambda>1 \,
\, \beta>0,
\end{equation}
or the Hassel-family (See \cite{Thunberg}):
\begin{equation}
H_{\lambda,\beta}(x) = \frac{\lambda x}{(1+x)^{\beta}}, \, \,
\lambda>1 \, \, \beta>0.
\end{equation}
However the perhaps most famous unimodal map is the logistic map:
\begin{equation}\label{logeq}
Q_\lambda (x)=\lambda x(1-x), \, \, \lambda>0.
\end{equation}
The logistic map is defined on the half-line $x\in [0,\infty[$,
but all of its interesting dynamics takes place on the bounded
interval $I=[0,1]$ for $0< \lambda \le 4$. If we consider discrete
time evolution by the mapping $Q_\lambda : I \rightarrow I$,
letting the state be $x_n$ at time $n$ then we have $x_{n+1} =
Q_\lambda (x_n)$ at time $n+1$. In the prescribed interval the
parameter $\lambda$ is increased which results in a
period-doubling route to chaos. For a thorough background on
unimodal maps and chaos see \cite{Thunberg}.

\section{Fractional Calculus}
There are many models of fractional diff-integration, however the
more popular ones are the Riemann-Liouville-type operator and
Caputo-type operator \cite{Munkhammar2}, whereas the latter one is
favored in cases of lack of initial conditions. We shall use the
Riemann-Liouville fractional operator in order to construct the
fractional logistic map. The Riemann-Liouville fractional integral
is defined as follows.

\begin{definition}\label{RLINTEGRAL}
 If $f(x)\in C([a,b])$ and $a<x<b$ then
 \begin{equation}\label{RLIN}
   I_{a+}^{\alpha} f(x) := \frac {1}{\Gamma (\alpha)} \int_{a}^{x}
   \frac{f(t)}{(x-t)^{1-\alpha}} \, dt
 \end{equation}

 where $\alpha\in ]-\infty,\infty[ $, is called the Riemann-Liouville
 fractional integral of order $\alpha$. In the same
 fashion for $\alpha\in ]0,\infty[$ we let
 \begin{equation}\label{RLDERIVATIVE}
   D_{a+}^{\alpha} f(x) := \frac{1}{\Gamma (1-\alpha)} \frac{d}{dx}
   \int_{a}^{x} \frac{f(t)} {(x-t)^{\alpha}} \, dt,
 \end{equation}
 which satisfies
 \begin{equation}
    D_{a+}^0 f(x) = I_{a+}^0 f(x) =f(x),
 \end{equation}
and is called the Riemann-Liouville fractional derivative of order
$\alpha$.
\end{definition}
The fractional derivative is a special-case of the fractional
integral. A fractional derivative of order $1/2$ is called a
semi-derivative and a fractional integral of the same order is
called semi-integral \cite{Munkhammar2}. Both the fractional
derivative and fractional integral satisfies the important
semi-group property:
\begin{theorem}\label{SEMIGROUP}
 For any $f \in C([a,b])$ the Riemann-Liouville fractional integral satisfies
 \begin{equation}
   I_{a+}^{\alpha} I_{a+}^\beta f(x)= I_{a+}^{\alpha+\beta}f(x)
 \end{equation}
 for $\alpha>0,\beta>0$.
\end{theorem}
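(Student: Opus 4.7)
The plan is to expand the left-hand side by applying Definition \ref{RLINTEGRAL} twice, interchange the order of integration via Fubini's theorem, and reduce the resulting inner integral to a Beta function.

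First I would write
\begin{equation*}
I_{a+}^{\alpha} I_{a+}^{\beta} f(x) = \frac{1}{\Gamma(\alpha)\Gamma(\beta)} \int_a^x (x-t)^{\alpha-1} \left( \int_a^t (t-s)^{\beta-1} f(s) \, ds \right) dt.
\end{equation*}
Since $f \in C([a,b])$ it is bounded, and because $\alpha,\beta>0$ the singularities $(x-t)^{\alpha-1}$ and $(t-s)^{\beta-1}$ are integrable on the triangular region $\{(s,t) : a \le s \le t \le x\}$. This justifies applying Fubini's theorem to swap the order of integration, giving
\begin{equation*}
I_{a+}^{\alpha} I_{a+}^{\beta} f(x) = \frac{1}{\Gamma(\alpha)\Gamma(\beta)} \int_a^x f(s) \left( \int_s^x (x-t)^{\alpha-1}(t-s)^{\beta-1} \, dt \right) ds.
\end{equation*}

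Next I would evaluate the inner integral with the substitution $t = s + (x-s)u$, $dt = (x-s)\,du$, which maps $t \in [s,x]$ to $u \in [0,1]$ and turns it into
\begin{equation*}
(x-s)^{\alpha+\beta-1} \int_0^1 (1-u)^{\alpha-1} u^{\beta-1} \, du = (x-s)^{\alpha+\beta-1} B(\alpha,\beta).
\end{equation*}
Using the classical identity $B(\alpha,\beta) = \Gamma(\alpha)\Gamma(\beta)/\Gamma(\alpha+\beta)$, the Gamma factors cancel and we are left with
\begin{equation*}
\frac{1}{\Gamma(\alpha+\beta)} \int_a^x (x-s)^{\alpha+\beta-1} f(s) \, ds = I_{a+}^{\alpha+\beta} f(x),
\end{equation*}
which is the desired identity.

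The only delicate step is the application of Fubini's theorem; the rest is a routine Beta-integral computation. I would justify Fubini by noting that $|f|$ is bounded by some $M$ on $[a,b]$ and estimating the iterated integral of the absolute value by $M \cdot B(\alpha,\beta) (x-a)^{\alpha+\beta-1}/(\alpha+\beta-1)$-type bounds (finite since $\alpha,\beta>0$), so the integrand is in $L^1$ of the triangle and the exchange of integration order is legitimate.
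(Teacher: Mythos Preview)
Your proof is correct and is the standard argument: iterate the definition, swap the order of integration by Fubini (justified because $f$ is continuous, hence bounded, and the kernel is integrable for $\alpha,\beta>0$), and collapse the inner integral to a Beta function via the substitution $t=s+(x-s)u$. The only quibble is cosmetic: the $L^1$ bound you sketch at the end should come out as $M\,B(\alpha,\beta)(x-a)^{\alpha+\beta}/(\alpha+\beta)$ rather than with exponent $\alpha+\beta-1$, but this does not affect the validity of the Fubini step.

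As for comparison with the paper: the paper does not actually supply a proof of this theorem. It simply states the result and refers the reader to \cite{Munkhammar2} for a proof. So there is no in-paper argument to compare against; you have filled in the details that the paper outsources to a reference, and the argument you give is exactly the classical one found in standard sources such as \cite{WHITE}.
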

A proof may be found in \cite{Munkhammar2}. For more information
regarding fractional calculus and its applications see
\cite{Munkhammar1,Munkhammar2,WHITE}.

\section{Fractional dynamics}

\subsection{Basic properties of the fractional logistic map}
Many investigations of fractional dynamical systems contain
fractional derivatives in time which is due to their continuous
time derivative (See \cite{OSC,Li,Chen}). There exists, however,
fractional differential equations in spatial sense, take the
fractional Schr\"odinger equation for example \cite{Laskin}. In
our fractional generalization of the logistic map we shall use a
spatial fractional derivative. If we let the Riemann-Liouville
fractional integral \eqref{RLDERIVATIVE} operate on the logistic
map \eqref{logeq} we get:
\begin{equation}
I_{0+}^{\alpha} Q_\lambda (x) = \frac{\lambda}{\Gamma(\alpha +2)}
 \Big(1-\frac{2x}{\alpha+2}\Big)x^{1+\alpha},
\end{equation}
which is valid for $\alpha \in\, ]0, \infty[$ (for more details on
fractional differentiation see \cite{Munkhammar1}), thus
\begin{definition}\label{fraclogdef}
For all $x \geq 0$, $\lambda>0$ and $\alpha \in\, ]0, \infty[$:
\begin{equation}
Q_\lambda^\alpha (x) := \frac{\lambda}{\Gamma(\alpha +2)}
 \Big(1-\frac{2x}{\alpha+2}\Big)x^{1+\alpha},
\end{equation}
will be called the fractional logistic map (FLM) of order
$\alpha$.
\end{definition}
It is left as an exercise to show that the ordinary logistic map
\eqref{logeq} follows as a special-case when $\alpha=0$. The
Riemann-Liouville fractional operators have special properties for
the parameter-value $\alpha=1/2$ and are in these cases called
semi-integral and semi-derivative respectively
\cite{Munkhammar1,Munkhammar2,WHITE}. We shall therefore give the
definition:
\begin{definition}\label{semilogistic}
For $x \geq 0$ and $\lambda>0$ the $\alpha=1/2$ order fractional
logistic map appears like:
\begin{equation}
Q_\lambda^{1/2} (x)= \frac{4\lambda}{3\sqrt{\pi}} \Big(1-
\frac{4x}{5} \Big) x^{3/2},
\end{equation}
and will be called the semi-logistic map.
\end{definition}
Due to the special property of fractional diff-integrals we can
obtain the following useful theorem:
\begin{theorem}\label{Semitwo}
For all $\alpha, n \in ]0,\infty[$:
\begin{equation}
\frac{d^n}{dx^n} Q_\lambda^\alpha (x) = Q_\lambda^{\alpha-n} (x).
\end{equation}
\end{theorem}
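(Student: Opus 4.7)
The plan is to prove Theorem \ref{Semitwo} by direct computation, treating $Q_\lambda^\alpha(x)$ as a two-term monomial and applying the fractional (or ordinary, if $n$ is an integer) differentiation rule for powers $x^p$.

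First I would rewrite the definition in Definition \ref{fraclogdef} as a sum of two monomials by distributing and using $(\alpha+2)\Gamma(\alpha+2)=\Gamma(\alpha+3)$:
\begin{equation}
Q_\lambda^\alpha(x)=\frac{\lambda}{\Gamma(\alpha+2)}\,x^{\alpha+1}-\frac{2\lambda}{\Gamma(\alpha+3)}\,x^{\alpha+2}.
\end{equation}
This reduces the problem to computing $\dfrac{d^n}{dx^n}x^{p}$ for $p=\alpha+1$ and $p=\alpha+2$. For integer $n$ the classical power rule gives $\dfrac{d^n}{dx^n}x^p=\dfrac{\Gamma(p+1)}{\Gamma(p-n+1)}x^{p-n}$, and for real $n>0$ the same formula holds when $\dfrac{d^n}{dx^n}$ is interpreted as the Riemann--Liouville derivative $D_{0+}^n$ from Definition \ref{RLINTEGRAL}; the identity follows from evaluating the integral $\int_0^x t^p(x-t)^{-n}\,dt$ as a Beta function.

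Next I would apply this power rule termwise:
\begin{equation}
\frac{d^n}{dx^n}Q_\lambda^\alpha(x)=\frac{\lambda}{\Gamma(\alpha+2)}\cdot\frac{\Gamma(\alpha+2)}{\Gamma(\alpha-n+2)}x^{\alpha-n+1}-\frac{2\lambda}{\Gamma(\alpha+3)}\cdot\frac{\Gamma(\alpha+3)}{\Gamma(\alpha-n+3)}x^{\alpha-n+2},
\end{equation}
which after the $\Gamma$-factor cancellations becomes
\begin{equation}
\frac{\lambda}{\Gamma(\alpha-n+2)}x^{\alpha-n+1}-\frac{2\lambda}{\Gamma(\alpha-n+3)}x^{\alpha-n+2}.
\end{equation}
Collecting this back using $\Gamma(\alpha-n+3)=(\alpha-n+2)\Gamma(\alpha-n+2)$ yields exactly $Q_\lambda^{\alpha-n}(x)$ as given by the formula in Definition \ref{fraclogdef} with $\alpha$ replaced by $\alpha-n$.

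The main obstacle is notational rather than computational. The theorem allows $n\in\,]0,\infty[$, so $n$ need not be an integer and the left-hand side must be read as a fractional derivative; moreover when $n>\alpha$ the exponent $\alpha-n$ is negative, so $Q_\lambda^{\alpha-n}$ must be understood by analytic continuation of the defining formula. Provided one grants the standard monomial rule $D_{0+}^n x^p=\Gamma(p+1)/\Gamma(p-n+1)\,x^{p-n}$ (valid whenever the Gamma factors are finite), the proof is a one-line matching of coefficients. Alternatively, one could avoid this issue for non-integer $n$ by invoking Theorem \ref{SEMIGROUP} together with $Q_\lambda^\alpha=I_{0+}^\alpha Q_\lambda$, formally writing $D_{0+}^n I_{0+}^\alpha Q_\lambda = I_{0+}^{\alpha-n} Q_\lambda$; however, since Theorem \ref{SEMIGROUP} is stated only for positive orders, the direct monomial computation above is the cleanest route.
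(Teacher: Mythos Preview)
Your proof is correct, but it takes a genuinely different route from the paper's. The paper's proof is a one-line invocation: it simply appeals to Definition~\ref{fraclogdef} (so that $Q_\lambda^\alpha = I_{0+}^\alpha Q_\lambda$) together with the semi-group property of Theorem~\ref{SEMIGROUP}, reading $\frac{d^n}{dx^n}Q_\lambda^\alpha = D_{0+}^n I_{0+}^\alpha Q_\lambda = I_{0+}^{\alpha-n}Q_\lambda = Q_\lambda^{\alpha-n}$. You instead expand $Q_\lambda^\alpha$ into the two monomials $\lambda x^{\alpha+1}/\Gamma(\alpha+2)$ and $-2\lambda x^{\alpha+2}/\Gamma(\alpha+3)$ and apply the power rule $D_{0+}^n x^p = \Gamma(p+1)\,x^{p-n}/\Gamma(p-n+1)$ termwise; the Gamma factors cancel and the result repackages as $Q_\lambda^{\alpha-n}$. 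Interestingly, you mention the paper's semi-group argument as an alternative and then reject it on the grounds that Theorem~\ref{SEMIGROUP} is stated only for positive orders $\alpha,\beta>0$ and so does not literally cover the composition of a derivative with an integral; that objection is well taken, and in this sense your direct computation is actually more self-contained than the paper's own proof. The trade-off is that the paper's argument is conceptually cleaner (it explains \emph{why} the identity holds, namely because $Q_\lambda^\alpha$ is by construction a fractional primitive of $Q_\lambda$), whereas yours verifies the identity by an explicit but opaque coefficient match.
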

\begin{proof}
It follows directly from definition \ref{fraclogdef} and the
semi-group property (Theorem \ref{SEMIGROUP}).
\end{proof}
%
%
Despite the fact that the fractional logistic map is valid for
$\alpha \in [0, \infty]$ it is worth mentioning that probably the
most interesting dynamics takes place for $\alpha \in [0,1]$ since
theorem \ref{Semitwo} allows for translation to any other domain
by $N$-times integration/differentiation for any $N \in \Z$. We
can also give the following theorem:
\begin{theorem}\label{fundfrac}
For $\alpha \in [0,1]$:
\begin{equation}
Q_\lambda^\alpha (0) = Q_\lambda^\alpha(1+\frac{1}{2}\alpha) = 0,
\end{equation}
holds for all $\lambda \geq 0$.
\end{theorem}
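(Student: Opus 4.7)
The plan is to verify both zeros by direct substitution into the closed form
\[
Q_\lambda^\alpha(x) = \frac{\lambda}{\Gamma(\alpha+2)}\left(1 - \frac{2x}{\alpha+2}\right)x^{1+\alpha}
\]
from Definition \ref{fraclogdef}. The statement is not a deep result; it merely identifies the two factors whose vanishing forces $Q_\lambda^\alpha$ to vanish, exactly as in the classical case $\alpha=0$ where the zeros are $x=0$ and $x=1$.

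First I would dispose of the root at the origin. Since $\alpha \in [0,1]$, the exponent $1+\alpha$ is strictly positive, so $x^{1+\alpha}$ is well defined at $x=0$ and equals zero. Multiplying the bounded prefactor $\lambda/\Gamma(\alpha+2)$ and the finite quantity $1 - \frac{2x}{\alpha+2}\big|_{x=0} = 1$ by zero gives $Q_\lambda^\alpha(0) = 0$, as claimed.

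Next I would treat the nontrivial root $x = 1 + \tfrac{1}{2}\alpha$. The idea is that this value is precisely tailored to annihilate the linear factor $1 - \frac{2x}{\alpha+2}$. A short computation shows
\[
\frac{2x}{\alpha+2}\bigg|_{x=1+\alpha/2} = \frac{2\bigl(1+\tfrac{\alpha}{2}\bigr)}{\alpha+2} = \frac{2+\alpha}{\alpha+2} = 1,
\]
so that factor vanishes. Since the remaining factor $\frac{\lambda}{\Gamma(\alpha+2)}\, x^{1+\alpha}$ is finite at $x = 1+\alpha/2$ (the gamma function is positive for $\alpha+2 \in [2,3]$, and $x>0$), the product is zero. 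This handles the second equality for every $\lambda \geq 0$.

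There is essentially no obstacle: the only thing to watch is the well-definedness of $x^{1+\alpha}$ at $x=0$, which requires $1+\alpha>0$ and is guaranteed by the hypothesis $\alpha \in [0,1]$, and the non-vanishing of $\Gamma(\alpha+2)$ on that range, which is immediate. Thus the whole proof reduces to two one-line substitutions.
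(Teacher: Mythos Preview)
Your proof is correct and follows essentially the same approach as the paper: both argue $Q_\lambda^\alpha(0)=0$ directly from the factor $x^{1+\alpha}$ in Definition~\ref{fraclogdef}, and both verify $Q_\lambda^\alpha(1+\tfrac{1}{2}\alpha)=0$ by substituting to obtain $1-\frac{2+\alpha}{\alpha+2}=0$ in the linear factor. Your added remarks on the positivity of $1+\alpha$ and the nonvanishing of $\Gamma(\alpha+2)$ are a slight refinement but do not change the argument.
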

\begin{proof}
$Q_\lambda^\alpha (0) =0$ follows directly from definition
\ref{fraclogdef}. Also we find that
$Q_\lambda^\alpha(1+\frac{1}{2}\alpha) = 0$ by inserting $x =
1+\frac{1}{2}\alpha$:
\begin{equation}
Q_\lambda^\alpha (1+\frac{1}{2}\alpha) =
\frac{\lambda}{\Gamma(\alpha +2)} \Big(1-\frac{2+\alpha}{\alpha
+2} \Big) \Big(1+\frac{1}{2}\alpha \Big)^{1+\alpha} = 0.
\end{equation}
Note that $\lambda \geq 0$ is a part of the definition of the
fractional logistic map (Definition \ref{fraclogdef}).
\end{proof}
In order to find fixed points of the fractional logistic map we
need the following criterion to be fulfilled:
\begin{equation}
Q_\lambda^\alpha(x) = x,
\end{equation}
which is an equation on the form:
\begin{equation}\label{fundeq}
x^{1+\alpha} - \frac{\alpha+2}{2}x^\alpha +
\frac{\Gamma(\alpha+3)}{2\lambda} = 0.
\end{equation}
This equation has non-fractional solutions for $\alpha \in
[1,2,3,...]$ which corresponds to simple integer integrations of
the logistic map. For the fractional values a general solution is
left as an open problem. Below we numerically study the dynamics
of the fractional logistic map.

\subsection{Period doubling and chaos}\label{calc}
With the aid of \textit{Mathematica 5} and software developed in
\cite{Knapp} we were able to numerically study the fractional
logistic map. As an example we plot the fractional logistic map in
the domain $x \in [0,1]$, $\alpha \in [0,1]$:
\begin{center}\label{fig1}
\epsfig{file=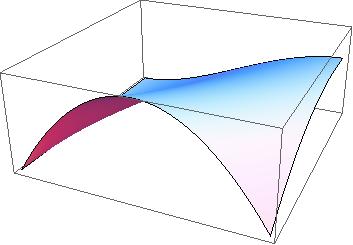,height=146pt,width=212pt}\\
\small Fig.1. $\alpha \in [0,1]$, $x\in [0,1]$. \normalsize
\end{center}
Note that $x=1$, $\alpha=1$ is a local maximum in the
$x$-direction since the fractional logistic map for $x=1$ and
$\alpha=0$ is zero due to the property of fractional
diff-integrals in theorem \ref{Semitwo}. As an example we perform
three iterates and end up with the following plot:
%
%
\begin{center}\label{fig2}
\epsfig{file=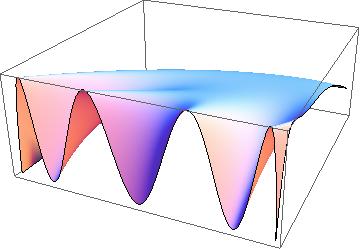,height=146pt,width=212pt}\\
\small Fig.2. $\alpha \in [0,1]$, $x\in [0,1]$. \normalsize
\end{center}
In figure $2$ we see
$Q_\lambda^\alpha(Q_\lambda^\alpha(Q_\lambda^\alpha(x)))$. It is
also worth mentioning that the general appearance for any choice
of $\alpha \in [0,1[$ appears similar to the ordinary logistic map
(which is the special case when $\alpha =0$). Indeed, if we
further examine the special case $\alpha=1/2$, the semi-logistic
map, then we see that numerically a period-doubling route to chaos
appears as $\lambda$ is increased from $4.5$ to $6.1$:
\begin{center}\label{fig3}
\epsfig{file=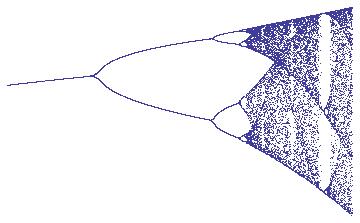,height=146pt,width=212pt}\\
\small Fig.3. $\alpha =1/2$, $x\in [0,3]$. \normalsize
\end{center}
The appearance of this bifurcation diagram is similar to that
observed for the logistic map \cite{Thunberg}. Thus we have hinted
that the fractional logistic map exhibits a period-doubling route
to chaos.

\section{Discussion and open problems}
In this paper we studied the discrete time evolution of a
fractional generalization of the logistic map; the fractional
logistic map. We showed some basic properties of the fractional
logistic map and numerically hinted a period-doubling route to
chaos for the special case $\alpha = 1/2$. A rigorous proof of
this is left for further investigation. The fractional parameter
$\alpha$ in the fractional logistic map brings on a whole new
range of possibilities for traditional discrete dynamics. The
possibility of showing that "fractional chaos" arises when letting
the traditional bifurcation parameter be constant while varying
the fractional parameter is left as an interesting open problem.
Furthermore, the dynamics of the fractional logistic map in the
complex plane is also left as an open issue for further
investigation. The connection between fractional discrete time
dynamics, like the fractional logistic map, and other fractional
dynamical systems such as the fractional R \"ossler and fractional
Lorenz systems are also open issues.




\begin{thebibliography}{99}

\bibitem{OSC}
B. N. N. Achar, J. W. Hanneken, T. Enck, T.Clarke,
\textit{Dynamics of the fractional oscilator}, Physica A 297
(2001), 361-367.

\bibitem{Cascaval}
R. C. Cascaval, E. C. Eckstein, C. L. Frota, J. A. Goldstein,
\textit{Fractional telegraph equations}, J. Math. Anal. Appl. 276
(2002), 145-159.

\bibitem{Devaney}
R. L. Devaney, \textit{Chaotic Dynamical Systems}, Westview press
(2003).

\bibitem{Knapp}
R. Knapp, M. Sofroniou, \textit{Some Numerical Aspects of
Functional Iteration and Chaos in Mathematica},
http://library.wolfram.com/infocenter, Mathematica Information
center (2005).

\bibitem{Laskin}
N. Laskin, \textit{Fractional Schr\"odinger equation}, Phys. Rev.
E 66, 056108 (2002).

\bibitem{Li}
C. Li, G. Chen, \textit{Chaos and Hyper chaos in the fractional
order Rö\"ossler equations}, Physica A 341 (2004), p.55-61.

\bibitem{Chen}
C. Li, G. Peng, \textit{Chaos in Chen's system with a fractional
order}, Chaos Sol. frac. 22 (2004) p.443-450.

\bibitem{MARK}
M. M. Meerschaert, \textit{The fractional calculus project}, MAA
Student Lecture, Phoenix January (2004).

\bibitem{Munkhammar1}
J. D. Munkhammar, \textit{Riemann-Liouville fractional derivatives
and the Taylor-Riemann series}, UUDM project report 2004:7 (2004).

\bibitem{Munkhammar2}
J. D. Munkhammar, \textit{Fractional Calculus and the
Taylor-Riemann series}, RHIT U. J. Math. 6(1) (2005).

\bibitem{Munkhammar3}
J. D. Munkhammar, \textit{Riemann-Liouville Fractional Einstein
Field Equations},  arXiv:1003.4981v1 (2010).

\bibitem{Oparnica}
L. Oparnica, \textit{Generalized fractional calculus with
applications in mechanics}, Matematicki Vesnik 53 (2001)
p.151-158.

\bibitem{rossler}
O. E. R\"ossler, \textit{An equation for continuous chaos}, Phys.
lett. A57 (1976), p.397-399.

\bibitem{WHITE}
S. G. Samko, A. A. Kilbas, O. I. Marichev, \textit{Fractional
integrals and derivatives: theory and applications}, Gordon and
Breach, Amsterdam, (1993).

\bibitem{Strogatz}
S. H. Strogatz, \textit{Non-linear dynamics and chaos}, Perseus
Books publ. (1994).

\bibitem{Thunberg}
H. Thunberg, \textit{Periodicity versus chaos in one-dimensional
dynamics}, SIAM Review Vol. 43 No. 1 (2001), p.3-30.

\end{thebibliography}
\end{document}